\newtheorem{theorem}{Theorem}
\newtheorem{definition}[theorem]{Definition}
\newtheorem{lemma}[theorem]{Lemma}
\newtheorem{proposition}[theorem]{Proposition}
\newtheorem{remark}[theorem]{Remark}
\newtheorem{example}[theorem]{Example}
\newtheorem{problem}[theorem]{Problem}
\newtheorem{algorithm}[theorem]{Algorithm}
\newcommand{\abs}[1]{\left|#1\right|}
\def\field#1{\mathbb #1}%
\def\R{\field{R}}%
\def\N{\field{N}}%
\newcommand{\X}{\ensuremath{\mathcal X}}
\newcommand{\U}{\ensuremath{\mathcal U}}
\newcommand{\Rn}[1][n]{\R^{#1}}
\newcommand{\Rp}{\R_{\geq 0}}
\newcommand{\Rsp}{\R_{> 0}}
\def\K{\mathcal{K}}%
\DeclareMathOperator{\id}{id}
\def\KL{\mathcal{KL}}%
\def\Kinf{\mathcal{K}_\infty}%
\let\ol=\overline%
\let\ul=\underline%
\def\A{\mathcal{A}}
\providecommand{\fsCLF}{fs-CLF\xspace}
\providecommand{\fsCLFs}{fs-CLFs\xspace}
\providecommand{\CLF}{CLF\xspace}
\providecommand{\CLFs}{CLFs\xspace}
\providecommand{\afcl}{\textcolor{black}{admissible finite-step feedback}\xspace}
\providecommand{\fslfcs}{{finite-step Lyapunov function candidates}\xspace}
\title{Control of discrete-time nonlinear systems via finite-step control Lyapunov functions}
\author[First]{Navid~Noroozi}
\address[First]{Otto-von-Guericke University of Magdeburg, Laboratory for Systems Theory and Automatic Control, 39106 Magdeburg, Germany}
\ead{navid.noroozi@ovgu.de}
\author[Second]{Roman~Geiselhart}
\address[Second]{University of Ulm, %
    Institute of Measurement, Control and Microtechnology, %
    Albert-Einstein-Allee 41, %
    89081 Ulm, Germany}
\ead{roman.geiselhart@uni-ulm.de} 
\author[Third]{Lars~Gr\"une}
\address[Third]{University of Bayreuth, 
    Mathematical Institute, %
    Universit\"atsstra\ss e 30, %
    95440 Bayreuth, Germany}
\ead{lars.gruene@uni-bayreuth.de}
\author[Fourth]{Fabian~R.~Wirth}
\address[Fourth]{University of Passau, %
    Faculty of Computer Science and Mathematics, %
    Innstra\ss e 33, %
    94032 Passau,    Germany}
\ead{fabian.lastname@uni-passau.de}
\begin{document}

\begin{abstract}
In this work, we establish different control design approaches for discrete-time systems, which build upon the notion of \emph{finite-step} control Lyapunov functions (\fsCLFs).
The design approaches are formulated as optimization problems and solved in a model predictive control (MPC) fashion.
In particular, we establish contractive multi-step MPC with and without reoptimization  and compare it to classic MPC.
The idea behind these approaches is to use the fs-CLF as running cost.
These new design approaches are particularly relevant in situations where information exchange between plant and controller cannot be ensured at all time instants.
An example shows the different behavior of the proposed controller design approaches.

\end{abstract}

\begin{keyword}
Lyapunov methods, model predictive control, discrete-time systems
\end{keyword}

\maketitle

\section{Introduction}

Lyapunov functions are a central tool in the context of nonlinear control theory as they do not only serve as certificates of stability and simplify stability proofs, but also provide means to quantify robustness or redesign the controller to improve robustness of the feedback connection~\cite{Khalil.2002}.
This has the drawback that systematic methods for obtaining Lyapunov functions for general nonlinear systems still do not exist.
In particular, standard Lyapunov function candidates, including quadratic, weighted supremum norm and weighted $1$-norm functions, do not necessarily decay at each time step.\footnote{Here we consider discrete-time systems. 
A similar conclusion also holds for continuous-time systems.}

In contrast to classic Lyapunov functions, so-called \emph{finite-step} Lyapunov functions are energy functions which do not have to decay at each time step, but only after a fixed and finite number of steps.
This relaxation leads to significant contributions in the context of stability analysis of (large-scale) nonlinear systems~\cite{Geiselhart.2014c,Geiselhart.2015,Gielen.2015,Geiselhart.2016,Noroozi.2018.SG}.
In particular, it has been shown that any proper scaling of a $p$-norm function is a finite-step Lyapunov function for a large class of \emph{asymptotically} stable nonlinear systems~\cite{Geiselhart.2014c}.
Such \emph{converse} Lyapunov theorems are \emph{constructive} for control purposes in the sense that they provide an explicit way of construction of a Lyapunov function for control systems.
This motivates the use of such results for the controller design in nonlinear control systems.
In this paper, we generalize the notion of finite-step Lyapunov functions to control systems by introducing the notion of finite-step \emph{control} Lyapunov functions (\fsCLF).

Given a \fsCLF, we reformulate a \fsCLF-based control design into an optimization problem.
In particular, we link the \fsCLF-based control design to model predictive control (MPC) approaches.
By considering three different optimization setups for the \fsCLF-based design, we come up with three \fsCLF-based MPC approaches: a) contractive multi-step MPC; b) contractive \emph{updated} multi-step MPC; and c) classic (i.e. one-step) MPC.
In c), we focus on MPC without terminal constraints and/or costs, see, e.g.~\cite[Section 7.4]{Grune.2017a} and the references therein for a thorough discussion on MPC with or without additional stabilizing terminal ingredients.
In a) and b), the optimization problem includes a contractive condition guaranteeing a decay rate after a finite number of time steps.
In all these schemes the \emph{running} cost in the respective optimization problem is taken as the \fsCLF.
The a priori knowledge of such a \fsCLF is guaranteed by the converse
Lyapunov theorem stated as Theorem~\ref{thm:converse-Lyap} below.

\begin{figure}[!ht]
\centering
\includegraphics[scale=0.8]{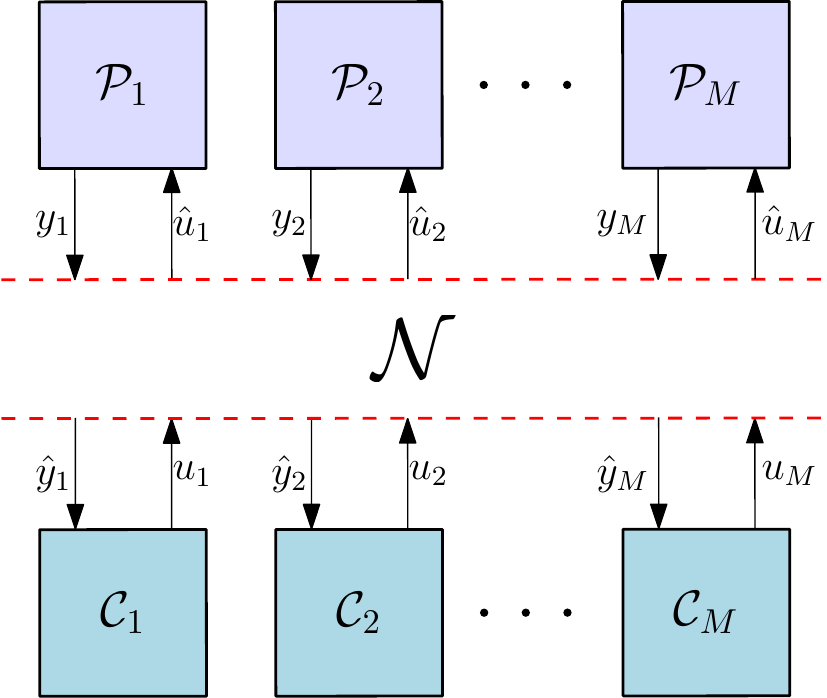}
\vspace{-0.35cm}\centering
\caption{The implementation of $M$ independent control loops over a single communication channel $\mathcal{N}$.}
\label{fig:NCS}
\end{figure}

Classic MPC approaches are based on the following philosophy: at \emph{each} time step we measure the current state value of the system, optimize a cost over the control input using model based predictions of the system response over a \emph{fixed} optimization horizon, implement the \emph{first} component of the computed control sequence, and repeat these steps ad infinitum~\cite{Grune.2017a}.
However, in practice, the controller and the plant may not communicate with each other at each time step.
In networked control systems applications, multiple (physically decoupled) plants often need to share a communication channel for exchanging information with their corresponding remotely located controllers; see~Figure~\ref{fig:NCS}.
Therefore only a few plants can exchange information with their
controllers at any instant of time and the remaining plants operate in
open-loop until they are granted access to the communication channel.
The allocation (also known as scheduling) of communication resources is
frequently performed in a \emph{periodic} fashion.
In such a situation, we need to develop a control setting in which the controller (if possible) sends not only one component, but a control sequence of the length equal to the periodicity of the allocation process at each transmission instant; see~\cite{Varutti.2009,Grune.2009} for such networked control systems configurations.
Such a scenario motivates our contractive \emph{multi-step} MPC, where the MPC does not communicate with the plant at each time step, but only after a fixed number of time steps.
The whole optimal control sequence is sent to the plant to compensate for the lack of access to the network.
To guarantee the stability of the resulting system, we include a \emph{contractive} constraint, which is obtained from the corresponding \fsCLF, into the optimization problem as an inequality constraint.

Another problem with the classic implementation of MPC is that the execution of the optimization problem at each time step may result in high computational cost (here we assume the controller and the plant can communicate at each time step).
To keep the computational cost low, inspired by~\cite{Grune.2015,Grune.2014b}, the second control scheme proposes an updating approach based on re-optimizations on \emph{shrinking} horizons which are computationally less expensive than re-optimizations on the full horizon in classic MPC schemes.
Similar to the first scheme, a \emph{contractive} constraint is also used in the optimization problem to ensure the stability of the overall system.
Finally, the third control scheme proposes a classic MPC approach in which the optimization problem is solved over a \emph{fixed} optimization horizon at each time step; and hence only the first component of the computed control sequence is applied to the plant.
Moreover, the contraction condition is \emph{not} considered as an additional inequality constraint in the optimization problem.
The absence of the contractive constraint reduces the computational complexity, though considering a fixed optimization horizon will increase the computational burden.

The MPC schemes we are proposing have similarities with MPC schemes known from the literature. Particularly, the scheme from Algorithm \ref{alg:multi-step-mpc}, in which the whole open loop optimal control sequence is used, is an instance of a contractive MPC scheme, as investigated, e.g., in~\cite{Alamir.2017a,Kothare.2000,Wan.2007,Yang.1993}. The contractivity constraint in Problem \ref{prob:OCP-1} can be seen as a nonlinear version of the respective condition in \cite{Kothare.2000,Wan.2007,Yang.1993}. Actually, under suitable conditions the explicit use of a contractive constraint can be replaced by a term in the cost functional with sufficiently high weight, see \cite[Theorem 3.18]{Worthmann.2011} or \cite{Alamir.2017a}. The paper \cite{Alamir.2017a} already mentions the possibility to use a \fsCLF in contractive MPC. Contractivity assumptions have also been used in MPC schemes with additional terminal constraints, see~\cite{Hanema.2017,Lazar.2015}. The updating technique with shrinking horizon in Algorithm \ref{alg:re-opt-step-mpc} was inspired by~\cite{Grune.2015,Grune.2014b}, where a theoretical robustness analysis of this method is performed. Finally, the MPC Algorithm \ref{alg:classic-mpc} without terminal constraints is classical and the particular stability analysis in Theorem \ref{thm:OCP-3a} uses techniques from \cite{Gruene.2009,Gruene.2010} (see also \cite[Section 6]{Grune.2017a}), which in turn can be seen as a refinement of earlier, similar approaches in \cite{Grimm.2005,Tuna.2006,Gruene.2008}.

Throughout this paper we consider a stabilization problem with respect to a \emph{closed} (not necessarily compact) set.
This treatment enables us to formulate several stabilization problems in a unified manner.

This paper is organized as follows:
First, relevant notation is recalled in Section~\ref{sec:notation}.
Then the notion of \fsCLFs together with some other relevant notions are introduced in Section~\ref{sec:preliminaries}.
The \fsCLF-based MPC schemes are developed in Section~\ref{sec:fsclf-based-mpc-approaches}.
Section~\ref{sec:conclusions} concludes the paper.

\section{Notation} \label{sec:notation}

In this paper, $\Rp (\Rsp)$ and $\N (\N^*)$ denote the nonnegative
(positive) real numbers and the nonnegative (positive) integers,
respectively.
For a set $\mathcal{S} \subseteq \Rn$, $\operatorname{int}(\mathcal{S})$ and $\operatorname{co}(\mathcal{S})$, respectively, denote the interior and the convex hull of $\mathcal{S}$.
Given $\mathcal{S} \subseteq \Rn$, $\mathcal{S}^\ell :=
\underbrace{\mathcal{S} \times \dots \times \mathcal{S}}_{\ell \, \,
  \mathrm{times}}$ is the $\ell$-fold Cartesian product.
The $i$th component of $v \in \Rn$ is denoted by $v_i$.
For any $x \in \Rn$, $x^\top$ denotes its transpose.
We write $(x,y)$ to represent $[x^\top,y^\top]^\top$ for $x \in \Rn,y \in \R^p$.
For $x \in \Rn$, we, respectively, denote the Euclidean norm and the maximum norm by $\abs{x}$ and by $\abs{x}_\infty$.
Given a nonempty set $\mathcal{A}\subset\Rn$ and any point $x\in\Rn$, we denote
$\abs{x}_\A \coloneqq \inf\limits_{y \in \A}\abs{x-y}$.
A function $\rho \colon \Rp \to \Rp$ is positive definite if it is continuous, zero at zero and positive otherwise. A positive definite function $\alpha$ is of class-$\K$ ($\alpha \in \K$) if it is zero at zero and strictly increasing. 
It is of class-$\Kinf$ ($\alpha \in \Kinf$) if $\alpha \in \K$ and also $\alpha(s) \to \infty$ if $s \to \infty$. A continuous function $\beta \colon \Rp \times \Rp \to \Rp$ is of class-$\mathcal{KL}$ ($\beta \in \mathcal{KL}$), if for each $s \geq 0$, $\beta(\cdot,s)\in\mathcal{K}$, and for each $r \geq 0$, $\beta (r,\cdot)$ is decreasing with $\beta (r,s)\to 0$ as $s \to \infty$.  The interested reader is referred to \cite{Kellett.2014} for more details about comparison functions.
The identity function is denoted by $\id$.
Composition of functions is denoted by the symbol $\circ$ and repeated composition of, e.g., a function $\gamma$ by $\gamma^{i}$.
For positive definite functions $\alpha,\gamma$ we write $\alpha<\gamma$ if $\alpha(s)<\gamma(s)$ for all $s>0$.

\section{Preliminaries}\label{sec:preliminaries}

We first introduce the notions of admissible finite-step feedback control laws and \fsCLFs.
We then show that our definition implies that an admissible finite-step feedback control law generated by a \fsCLF stabilizes the system of interest.
The idea how to construct a \fsCLF is given by a converse Lyapunov theorem.

\subsection{Finite-step control Lyapunov function}

Consider the discrete-time system
\begin{equation}  \label{eq:uncontrolled-system}
x (t+1) = g \big( x(t),u(t) \big), \qquad t \in \N
\end{equation}
with state $x \in \X \subseteq \R^n$ and control input $u \in \U \subseteq \R^m$.
We assume $g \colon \X \times \U \to \R^n$ is continuous.
Moreover, we assume that $g$ is \emph{$\K$-bounded} on $(\X,\U)$ as defined below.

\begin{definition}
A continuous and nonnegative $\omega \colon \Rn \to \Rp$ is called a
\emph{measurement function}, if the preimage of $0$ satisfies $\omega^{-1}(0)\neq\emptyset$.
\end{definition}
\begin{definition} \label{def:gKb} 
Consider system \eqref{eq:uncontrolled-system}.
Given measurement functions $\omega_1:\R^n\to\Rp$ and $\omega_2:\R^m\to\Rp$, we call $g$ \emph{$\K$-bounded} on $(\X,\U)$ with respect to $(\omega_1,\omega_2)$ if there exist $\kappa_i \in \K$, $i =1,2$ such that
\begin{align*}
\omega_1 (g(\xi,\mu)) \leq \kappa_1 (\omega_1(\xi)) + \kappa_2 (\omega_2(\mu)) 
\end{align*}
for all $\xi \in \X$ and all $\mu \in \U$.
\end{definition}

The concept of $\K$-boundedness was introduced in~\cite{Geiselhart.2014c}
for the case when $\omega_i(\cdot)=\abs{\cdot}$. Extensions to $\K$-boundedness with respect to one (resp. two measurement functions) are given in~\cite{Noroozi.2018.SG} (resp.~\cite{Geiselhart.2017}).
Here we extend this concept to the constraint sets $\mathcal{X}$ and $\mathcal{U}$.
Frequently, $\omega_2$ will be taken as a norm.
Note that in the classic case $\omega_i(\cdot)=\abs{\cdot}$, $\K$-boundedness is equivalent to continuity of $g$ in the origin and boundedness of $g$ on bounded sets, see~\cite[Lemma~5]{Geiselhart.2017}.
Thus, any closed-loop system, consisting of a continuous plant controlled by an optimization-based or quantized controller, is
$\K$-bounded. 
We note that $\K$-boundedness is a necessary condition for input-to-state stability, see~\cite[Remark~3.3]{Geiselhart.2016}.

Let ${\bf u}=(u(0), u(1), \ldots)$ denote a possibly infinite control sequence for system~\eqref{eq:uncontrolled-system}, where $u(i)\in \U$ for all $i=0,1,\ldots$.
If we only study trajectories of~\eqref{eq:uncontrolled-system} over a finite horizon, we might restrict to finite control sequences denoted by $\mathbf{u}_k := (u(0),\dots,u(k-1)) \in \U^k$.
Given a control sequence $\bf u$ and an initial value $\xi \in \X$, the corresponding solution to~\eqref{eq:uncontrolled-system} is denoted by $x (\cdot,\xi,\mathbf{u}(\cdot))$, also the notation $x (\cdot,\xi,\mathbf{u})$ or $x(\cdot)$ will be used.

We require some notation to state the definitions below.
Let $M \in \N^*$ be fixed. 
For $\xi\in \X$ and $\mathbf{u}_M = (u(0),  \ldots, u(M-1))\in \U^M$ we define
\begin{equation*}
g^1(\xi,\mathbf{u}_M) \coloneqq g(\xi,u(0))
\end{equation*}
and inductively, for $j=1,\ldots ,M-1$,
\begin{equation*}
g^{j+1}(\xi,\mathbf{u}_{M}) \coloneqq g\big(g^{j}(\xi, \mathbf{u}_M),u(j)\big).    \end{equation*}

We note that strictly speaking $g^j$ is only a function of $\xi$ and
$(u(0), \ldots,u(j-1))$, but there is no benefit in making this
precise notationaly so we stick to the simpler version.
Consider system~\eqref{eq:uncontrolled-system} and a map $q: \X \to
\U^M$. We wish to interpret $q$ as a feedback evaluated every $M$ steps.
Given an initial condition $x(0) = \xi \in \X$, the feedback $q$ determines a closed-loop trajectory $x_q$ of~\eqref{eq:uncontrolled-system} as
follows. For $j=0,\ldots,M-1$ we let $x_q(j+1) = g^{j+1}(x(0), q(x(0))$ and
at time $M$ we evaluate the feedback again and repeat the process. We
obtain inductively for $k\in \N, j=0,\ldots,M-1$ that
\begin{equation*}
    x_q(k M+ j+1) = g^{j+1} \bigl(x_q(k M), q(x_q( k M))\bigr).
\end{equation*}
In the sequel we use the notation $u_q \in \U^\N$ to denote the sequence of control inputs generated by the repeated application of the feedback $q$ and we denote interchangeably 
\begin{equation}\label{eq:interchangenotation}
    x(\cdot,\xi, u_q) = x_q (\cdot) .
\end{equation}

\begin{definition}\label{def:afcl}
Let a measurement function $\omega:\Rn \to\Rp$ and some $M\in \N^*$ be given.
A map $q:\X\to \U^M$ is called an \emph{\afcl} (of length $M$) for system~\eqref{eq:uncontrolled-system}, if for all $\xi \in \X$ and all $j =1,\ldots,M$, the following properties hold:
\begin{enumerate}
\item $g^j \big(\xi, q(\xi) \big)\in\X$;
\item $\xi\mapsto g^j \big(\xi, q(\xi) \big)$ is $\K$-bounded on $\X$ with respect to $\omega$, i.e., there exist $\kappa_j \in \K$ such that
\begin{align}\label{eq:K-bounded-closed-loop}
\omega \Big(g^j (\xi,q(\xi))\Big) \leq \kappa_j\big(\omega(\xi)\big) \qquad \forall \xi \in \X .
\end{align}
\end{enumerate}
\end{definition}

Condition~(i) of Definition~\ref{def:afcl} justifies the terminology \emph{admissible} as it ensures that trajectories of the closed-loop system obtained by applying the map $q$ to~\eqref{eq:uncontrolled-system}
stay in $\X$.
In addition, condition~(ii) ensures that along trajectories the measure $\omega$ remains bounded on bounded time intervals.

\begin{definition} \label{def:ISS-stabilization} Consider system
    \eqref{eq:uncontrolled-system} and let a measurement function
    $\omega:\Rn \to\Rp$ and some $M\in \N^*$ be fixed.  Consider an \afcl $q
    \colon \X \to \U^M$ for system~\eqref{eq:uncontrolled-system}.  We say
    that $q$ asymptotically $\omega$-stabilizes the set ${\cal A} := \omega^{-1}(0)$, if there
    exist $\beta \in \mathcal{KL}$ and $\gamma \in \K$ such that for all
    $\xi \in \X$ and all $t \in \N$ we have
\begin{equation} \label{eq:asymp-estimate}
\omega\big(x (t,\xi,u_q)\big) \leq \beta\big(\omega(\xi), t \big) .
\end{equation}
In this case, the resulting closed-loop system
\begin{equation}  \label{eq:controlled-system}
x_q (t+1) = g \big(x_q (t),u_q (t) \big), \qquad t \in \N,
\end{equation}
is asymptotically $\omega$-stable in ${\cal A}$. If the
function $\beta$ in~\eqref{eq:asymp-estimate} can be taken as
\begin{align}  \label{eq:beta-exp}
\beta (r,s) = C \sigma^s r,
\end{align}
with $C \geq 1$ and $\sigma \in [0,1)$, then we call $q$ \emph{exponentially} $\omega$-stabilizing.
\end{definition}

Note that standard asymptotic stability of the origin is obtained by taking the
measurement function $\omega(\cdot) = \abs{\cdot}$.

\begin{remark}
    We note that while the definition of the concept of $\omega$-stabilization looks familiar, some care has to be applied in  its interpretation.
    As the notion of a measurement function is quite general and as we do not assume continuity of the closed-loop system several surprising effects can appear.
    In particular, in the generality of Definition~\ref{def:ISS-stabilization} the following situations cannot be ruled out:
    \begin{enumerate}[(i)] 
      \item ${\cal A}$ is compact and all trajectories not starting in
        ${\cal A}$ diverge to $\infty$ or to the boundary of ${\cal
          X}$. This requires discontinuity of $q$.
      \item The feedback $q$ is continuous, ${\cal A}$ is unbounded and
        for certain trajectories $\mathrm{dist}_{\cal A}(x_q(\cdot,x_0))$ is
        strictly increasing.
      \item Given $\varepsilon>0$ there is no $\delta> 0$ such that
        $\mathrm{dist}_{\cal A}(x_0)<\delta$ implies $\mathrm{dist}_{\cal
          A}(x_q(t,x_0))< \varepsilon$ for all $t\geq 0$.
    \end{enumerate}
    Examples for these effects are easy to construct and we leave the details
    to the reader. There are easy additional assumptions that remove these
    peculiarities. For instance, one could assume that there is $\alpha\in
    {\cal K}$ such that $\alpha(\mathrm{dist}_{\cal A}(x)) \leq \omega(x)$
    for all $x\in {\cal X}$. This assumption already rules out (i) and
    (ii). 
\end{remark}

Now we introduce \emph{finite-step control Lyapunov functions}, which is the key concept used for the control design in the next section.

\begin{definition}\label{def:fsCLF}
Let $\ul\alpha,\ol\alpha\in\Kinf$, $M\in \N^*$ and $\omega:\Rn\to\Rp$ be a measurement function.
Consider a continuous function $V \colon \R^n \to \Rp$ satisfying for all $\xi\in\R^n$,
\begin{gather}
\ul\alpha (\omega(\xi)) \leq V(\xi) \leq \ol\alpha (\omega(\xi))  \label{eq:3}.
\end{gather}
The function $V$ is called a \emph{finite-step control Lyapunov function}
(\fsCLF) (for the time step $M$) for system~\eqref{eq:uncontrolled-system} if there exists an \afcl $q \colon \X \to \U^M$ for~\eqref{eq:uncontrolled-system} and a function $\alpha\in\Kinf$, $\alpha<\id$ such that for all $\xi\in\X$,
\begin{equation}\label{eq:decayV}
V(x (M,\xi,u_q)) \leq \alpha (V(\xi)) .
\end{equation}
\end{definition}

\begin{remark}\label{rem:fsCLF}
If the conditions in Definition~\ref{def:fsCLF} are satisfied with $M=1$,
we call $V$ a \emph{control Lyapunov function} (\CLF).
We note that this definition of a \CLF differs from the usual definition in the literature by the assumption that the Lyapunov function comes together with an admissible feedback. This is equivalent to the fact that the control value $u$ realizing the decrease of the Lyapunov function satisfies the constraint $f(x,u)\in\X$, because once such a control value exists, the existence of a --- possibly discontinuous ---  admissible feedback is immediate.
In this sense, Definition~\ref{def:fsCLF} extends the definition of a CLF.

In the case $M=1$, the understanding of a \CLF is the following: The existence of a \CLF ensures the existence of an admissible feedback control law for which the resulting \CLF is a Lyapunov function, implying asymptotic $\omega$-stability of system~\eqref{eq:controlled-system}.\\
Definition~\ref{def:fsCLF} now demands that the same is true for $M>1$, a similar reasoning applies: The existence of a \fsCLF ensures
 the existence of an \afcl for which the resulting \fsCLF is a
 \emph{finite-step} Lyapunov function, again implying asymptotic
 $\omega$-stability, see~\cite{Geiselhart.2014c}.
\end{remark}

Similarly as for (1-step) \CLFs also the existence of a \fsCLF yields asymptotic stability as shown next.

\begin{proposition}\label{prop:P}
Let $V:\Rn\to\Rp$ be a fs-CLF for measurement function $\omega:\Rn\to\Rp$.
Let $q$ be the \afcl associated to~$V$.
Then $q$ asymptotically $\omega$-stabilizes the level set ${\cal A}$ for system~\eqref{eq:uncontrolled-system}.
\end{proposition}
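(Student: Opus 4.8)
The plan is to upgrade the $M$-step contraction~\eqref{eq:decayV} into a full $\KL$-estimate for $\omega$ along the closed loop. By Definition~\ref{def:afcl}(i) the closed-loop trajectory $x(\cdot,\xi,u_q)=x_q(\cdot)$ is well defined and remains in $\X$ for all times, so~\eqref{eq:decayV} may be applied repeatedly over the successive blocks $[kM,(k+1)M]$, which gives
\[
V\big(x(kM,\xi,u_q)\big)\leq\alpha^{k}\big(V(\xi)\big)\qquad\text{for all }k\in\N,\ \xi\in\X,
\]
with $\alpha^{k}$ the $k$-fold composition. Since $\alpha\in\Kinf$ and $\alpha<\id$, a standard comparison-function result (see, e.g.,~\cite{Kellett.2014}) yields $\widehat\beta\in\KL$ with $\alpha^{k}(s)\leq\widehat\beta(s,k)$ for all $s\geq0$ and $k\in\N$; together with the sandwich bound~\eqref{eq:3} this produces a $\KL$-estimate at the sampling instants, namely $\omega(x(kM,\xi,u_q))\leq\ul\alpha^{-1}(\widehat\beta(\ol\alpha(\omega(\xi)),k))$.

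Next I would transport this bound to arbitrary times. For $t\in\N^{*}$ write $t=kM+j$ with $k=\lceil t/M\rceil-1\in\N$ and $j\in\{1,\dots,M\}$; by construction of $x_q$ one has $x(t,\xi,u_q)=g^{j}(x(kM,\xi,u_q),q(x(kM,\xi,u_q)))$, so Definition~\ref{def:afcl}(ii) gives $\omega(x(t,\xi,u_q))\leq\kappa_{j}(\omega(x(kM,\xi,u_q)))\leq\kappa(\omega(x(kM,\xi,u_q)))$ with $\kappa:=\max_{j=1,\dots,M}\kappa_{j}\in\K$. Chaining with the sampling-time estimate (and treating $t=0$ trivially), one arrives at
\[
\omega\big(x(t,\xi,u_q)\big)\leq\kappa\Big(\ul\alpha^{-1}\big(\widehat\beta(\ol\alpha(\omega(\xi)),\lceil t/M\rceil-1)\big)\Big).
\]
For each fixed $t$ the right-hand side is a composition of $\K$ functions of $\omega(\xi)$, hence of class $\K$, and it is nonincreasing in $t$ with limit $0$; restoring continuity in $t$ by an elementary majorization of $(r,t)\mapsto\widehat\beta(r,\lceil t/M\rceil-1)$ by a genuine $\KL$ function (cf.~\cite{Kellett.2014}) yields $\beta\in\KL$ satisfying~\eqref{eq:asymp-estimate}, which is precisely the assertion that $q$ asymptotically $\omega$-stabilizes $\mathcal{A}=\omega^{-1}(0)$.

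The one step that is not pure bookkeeping is the second one: the contraction~\eqref{eq:decayV} holds only at multiples of $M$, so the excursions of $\omega$ over the $M-1$ intervening steps have to be controlled, and this is exactly where the $\K$-boundedness condition Definition~\ref{def:afcl}(ii) is indispensable — without it $\omega$ could be arbitrarily large at intermediate times even though it decays along the subsequence $x(kM,\cdot)$. Everything else reduces to routine comparison-function manipulations.
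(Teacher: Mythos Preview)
Your argument is correct. The paper's own proof is a two-line affair: it notes that invariance of $\X$ follows from Definition~\ref{def:afcl}(i) and then defers the remainder to \cite[Theorem~7]{Noroozi.2018.SG}. What you have written is precisely the self-contained version of that cited result --- iterate the $M$-step contraction~\eqref{eq:decayV} along the subsequence $kM$, invoke the $\K$-boundedness of Definition~\ref{def:afcl}(ii) to control the intermediate steps, and finish with standard comparison-function manipulations. The same line of reasoning is in fact spelled out later in the paper, in the proof of Proposition~\ref{prop:OCP}, where the authors construct the $\KL$-bound explicitly via $\alpha^{t/M}$ (defined by an $M$th root of $\alpha$ and linear interpolation); your use of a generic $\widehat\beta\in\KL$ majorizing $\alpha^{k}$ is an equivalent and slightly cleaner way to package the same estimate.
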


\begin{proof}
The invariance of the set $\X$, i.e. $g^i \big(x,q(x)\big)\in \X$ for all  $x \in \X$ and all $i=1,\ldots,M$, is ensured by the \afcl $q$ by  definition.
With this observation, the asymptotic $\omega$-stability of system~\eqref{eq:controlled-system} follows directly from~\cite[Theorem~7]{Noroozi.2018.SG}.
\end{proof}

\subsection{Measurement functions as \fslfcs} \label{sec:measurement-functions-as-fsfcs}

As stated in Proposition~\ref{prop:P},
system~\eqref{eq:uncontrolled-system} is asymptotically
$\omega$-stabilized 
in ${\cal A}$ if a \fsCLF and its associated \afcl $q$ are given.
Generally speaking, it is an open problem to find a (finite-step) (control) Lyapunov function candidate $V$.
Most existing converse Lyapunov theorems for nonlinear systems are not \emph{constructive} in the sense that the results are not usually useful for control purposes.
Recently, constructive converse Lyapunov theorems have been introduced in the case of asymptotic stability with respect to the origin in~\cite[Theorem 13]{Geiselhart.2014c}.
Here we extend Theorem~13 in~\cite{Geiselhart.2014c} to the case of asymptotic stability with respect to closed sets.
Our results show that, under a certain condition, 
the measurement function itself is a finite-step control Lyapunov function for the system.

\begin{theorem}\label{thm:converse-Lyap}
Consider system~\eqref{eq:uncontrolled-system}
with measurement function $\omega:\Rn \to \Rp$, $M\in\N^*$ and an admissible finite-step feedback $q:\X\to\U^M$.
Assume that the resulting closed-loop system~\eqref{eq:controlled-system} satisfies~\eqref{eq:asymp-estimate} with 
\begin{align} \label{eq:beta-condition}
\beta (r,M) < r
\end{align}
for all $ r>0$.
Then the function $V : \Rn \to \Rp$ defined by
\begin{align} \label{eq:converse-Lyap}
V(\xi) := \omega (\xi) \qquad \forall \xi \in \Rn ,
\end{align}
is a finite-step control Lyapunov function for the time step $M$ for~\eqref{eq:uncontrolled-system} with $\ul\alpha=\ol\alpha=\id$ and $\alpha(r)=\beta (r,M)$.
\end{theorem}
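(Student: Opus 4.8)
The plan is to verify directly that the function $V=\omega$ satisfies all the requirements of Definition~\ref{def:fsCLF}, using the admissible finite-step feedback $q$ that is already given in the hypotheses. Since $V=\omega$, the sandwich bound~\eqref{eq:3} holds trivially with $\ul\alpha=\ol\alpha=\id$, because $\id(\omega(\xi))\le\omega(\xi)\le\id(\omega(\xi))$. Continuity of $V$ follows from continuity of the measurement function $\omega$. The admissibility of $q$ — conditions (i) and (ii) of Definition~\ref{def:afcl} — is part of the assumption, so nothing needs to be checked there; in particular the closed-loop trajectory $x(\cdot,\xi,u_q)$ is well defined and stays in $\X$.

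The substantive point is the decay condition~\eqref{eq:decayV}. I would apply the assumed estimate~\eqref{eq:asymp-estimate} at time $t=M$: for every $\xi\in\X$,
\begin{equation*}
V(x(M,\xi,u_q)) = \omega(x(M,\xi,u_q)) \leq \beta(\omega(\xi),M) = \beta(V(\xi),M).
\end{equation*}
So with the choice $\alpha(r):=\beta(r,M)$ we obtain exactly $V(x(M,\xi,u_q))\le\alpha(V(\xi))$. It remains to confirm that this $\alpha$ is an admissible comparison function, i.e.\ $\alpha\in\Kinf$ with $\alpha<\id$. That $\alpha\in\K$ is immediate from $\beta\in\KL$: for each fixed second argument $M$, the map $r\mapsto\beta(r,M)$ is of class $\K$. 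The strict bound $\alpha<\id$ is precisely the hypothesis~\eqref{eq:beta-condition}, namely $\beta(r,M)<r$ for all $r>0$.

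The one genuine gap to close is that $\beta(\cdot,M)$ need not a priori be of class $\Kinf$ — a generic $\KL$ function gives only a $\K$ function in its first argument, which may be bounded. This is the step I expect to require the small technical argument borrowed from~\cite{Geiselhart.2014c}: one replaces $\alpha$ by a majorant $\tilde\alpha\in\Kinf$ with $\alpha\le\tilde\alpha<\id$. Such a $\tilde\alpha$ exists by a standard comparison-function construction (e.g.\ take $\tilde\alpha(r)=\max\{\alpha(r),\,\tfrac12(\alpha(r)+r)\}$ suitably modified for large $r$, or invoke the relevant lemma from~\cite{Kellett.2014}), and~\eqref{eq:decayV} remains valid with $\tilde\alpha$ in place of $\alpha$ since $\alpha\le\tilde\alpha$. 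With that substitution, $V=\omega$ meets every clause of Definition~\ref{def:fsCLF} for the time step $M$, which is the claim. (If one is content to state the theorem with $\alpha(r)=\beta(r,M)$ verbatim, then an implicit standing assumption that $\omega$ — hence the dynamics — is unbounded in the appropriate sense makes $\beta(\cdot,M)\in\Kinf$ automatic; I would flag this rather than hide it.)
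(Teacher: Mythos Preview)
Your proposal is correct and is exactly the argument the paper defers to: the paper's own proof is a one-line citation to \cite[Theorem~13]{Geiselhart.2014c}, where the same direct verification of Definition~\ref{def:fsCLF} is carried out. Your observation that $\beta(\cdot,M)$ is a priori only of class $\K$, not $\Kinf$, is a valid point the paper glosses over; your majorant fix works, and in fact $\tilde\alpha(r)=\tfrac12(\alpha(r)+r)$ already does the job without further modification (the $\max$ is redundant since $\alpha(r)<r$ gives $\alpha(r)<\tfrac12(\alpha(r)+r)<r$, and $\tfrac12(\alpha(r)+r)\ge r/2\to\infty$).
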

\begin{proof}
This is proved using the same arguments as those in the proof
of~\cite[Theorem 13]{Geiselhart.2014c}.
\end{proof}

Theorem~\ref{thm:converse-Lyap} states that, under condition~\eqref{eq:beta-condition}, a measurement function is a \fsCLF.
It is not hard to see that condition~\eqref{eq:beta-condition} \emph{always} holds for exponentially stable systems.
Moreover, there exist systems which are not exponentially stable, but only asymptotically stable and satisfy condition~\eqref{eq:beta-condition} (cf.~\cite[Example 16]{Geiselhart.2014c} for more details). 
Theorem~\ref{thm:converse-Lyap} can, therefore, be used for controller
design: Assume that system~\eqref{eq:uncontrolled-system} is
asymptotically $\omega$-stabilized by a feedback $q$ in ${\cal A}$.
Motivated by Theorem~\ref{thm:converse-Lyap}, one can take $\omega$ as the \fsCLF.
In particular, if system~\eqref{eq:uncontrolled-system} is exponentially
$\omega$-stabilizable in ${\cal A}$, then $\omega$ is always a \fsCLF for
the system and only $M$ needs to be determined.

\color{black}


\section{fs-CLF-Based MPC Approaches}\label{sec:fsclf-based-mpc-approaches}

This section elaborates how to construct stabilizing feedback laws via fs-CLFs.
In particular, we reformulate the control problem into an optimization problem which can be solved efficiently.

\subsection{\fsCLF-based contractive multi-step MPC}

To derive an optimization-based controller design, we impose the following problem.

\begin{problem}\label{prob:OCP-1}
Consider system~\eqref{eq:uncontrolled-system}.
Let $\omega$ be a measurement function.
Let $M\in \N^*$ and a \fsCLF $V$ for the time step $M$ with the associated decay function $\alpha\in\Kinf$, $\alpha<\id$ be given.
Also, let $x(0) =: \xi \in \X$ be given.
Compute 
${\bf u}_M^*=\big( u_0^*, \ldots, u_{M-1}^* \big) \in \U^M$
as an optimal solution of the following \emph{optimal control problem} 
\begin{equation} \label{eq:OCP1} \tag{OCP-1}
\begin{split}
\min_{\textbf{u}_M=(u_0, \ldots,u_{M-1})} \, & \, \sum_{i=0}^{M-1} V\big(x(i,\xi,{\bf u}_M)\big)\\
\mathrm{s.t.} & \mathrm{\, for \, all \, } j \in \{0,...,M-1\}  \\
  & \left\{ \begin{array}{rcll} 
x(j+1)&=&g(x(j),u_{j})  \\
u_j&\in& \U \\
g (x(j),u_{j}) &\in& \X  \\
V(x(M,\xi,{\bf u}_M)) &\leq& \alpha (V(\xi)) .
\end{array} \right. 
\end{split}
\end{equation}
\end{problem}

We note that under our general assumptions an optimal input ${\bf u}_M^*$ need not exist for OCP-1.
A minimal requirement is controlled invariance of ${\cal X}$, which we tacitly assume from now on.
Even then the existence of ${\bf u}_M^*$ is not guaranteed.
In the sequel, we will assume this existence for the sake of simplicity.
Otherwise similar arguments can be applied using approximately optimal inputs.
A similar comment holds for the optimal control problems we formulate below.

Note that $M$ in OCP-1 also determines the optimization horizon of the problem.
Here we make use of the optimal control sequence obtained from OCP-1 as an \afcl.
This implies that the controller communicates with the sensor every $M$ time steps and generates an optimal control sequence of length $M$ by solving OCP-1.
Then the \emph{whole} optimal control sequence is applied to the system and the procedure is repeated.
This procedure is summarized by the following algorithm.

\begin{algorithm}\label{alg:multi-step-mpc}
At each time step $t = k M$, $k \in\N$:
\begin{enumerate}[1)]
\item Measure the state $x(t) \in \X$ of system~\eqref{eq:uncontrolled-system}.
\item Set $\xi := x(t)$, solve Problem~\ref{prob:OCP-1} and denote the optimal control sequence satisfying~\eqref{eq:OCP1} by~${\bf u}_M^*$.
\item Define the finite-step feedback control value $\hat q (\xi)$ by
\begin{align} \label{eq:multi-step-mpc-control}
\hat q (\xi) := {\bf u}_M^* (\xi)
\end{align}
and apply it to system~\eqref{eq:uncontrolled-system} on the time interval
$k M, \ldots, (k+1)M-1$,
\item Go to Step 1.
\end{enumerate}
\end{algorithm}

We note that that the map $\xi \mapsto \hat {q}(\xi)$ implicitly defined in~\eqref{eq:multi-step-mpc-control} is an admissible feedback.
The following lemma shows that even small perturbations of such a feedback are admissible, which accommodates computational errors that are to be expected in applications.
  \begin{lemma}
      \label{lem:feedback-admissible}
Let $\omega:\Rn\to\Rp$ be a measurement function.
Assume $V:\Rn\to\Rp$ is a \fsCLF with associated \afcl $q$.
Then a feedback $h:\X \to \U^M$ is admissible, if it satisfies the constraints of OCP-1 and if in addition
\begin{equation}
    \label{eq:1}
    \sum_{i=0}^{M-1} V\big(x(i,\xi,h(\xi))\big) \leq \sum_{i=0}^{M-1}
    V\big(x(i,\xi,q(\xi))\big) \quad \forall \xi \in \X.
\end{equation}
  \end{lemma}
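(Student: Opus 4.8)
The plan is to verify directly that $h$ satisfies the two defining properties of an \afcl in Definition~\ref{def:afcl}: for all $\xi\in\X$ and all $j=1,\dots,M$, (i) $g^j(\xi,h(\xi))\in\X$, and (ii) there is $\tilde\kappa_j\in\K$ with $\omega(g^j(\xi,h(\xi)))\le\tilde\kappa_j(\omega(\xi))$. Property (i) is immediate from the assumption that $h$ satisfies the constraints of OCP-1: these force $h(\xi)\in\U^M$ and $g(x(j),u_j)\in\X$ for $j=0,\dots,M-1$, and since $x(j+1,\xi,h(\xi))=g(x(j,\xi,h(\xi)),u_j)=g^{j+1}(\xi,h(\xi))$ this gives exactly $g^j(\xi,h(\xi))\in\X$ for $j=1,\dots,M$.

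For property (ii) with $j\in\{1,\dots,M-1\}$, I would first bound the running cost of the reference feedback $q$: using the upper estimate in \eqref{eq:3} together with condition (ii) of Definition~\ref{def:afcl} applied to $q$ (with the convention $\kappa_0:=\id$ for the $i=0$ term $V(\xi)\le\ol\alpha(\omega(\xi))$), one obtains
\[
\sum_{i=0}^{M-1} V\big(x(i,\xi,q(\xi))\big)\le \sum_{i=0}^{M-1}\ol\alpha\big(\kappa_i(\omega(\xi))\big)=:\hat\kappa(\omega(\xi)),
\]
where $\hat\kappa:=\sum_{i=0}^{M-1}\ol\alpha\circ\kappa_i\in\K$ as a finite sum of class-$\K$ functions. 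Combining this with hypothesis \eqref{eq:1} and with $V\ge 0$ (so that each summand is dominated by the full sum), for every $j\in\{0,\dots,M-1\}$
\[
V\big(x(j,\xi,h(\xi))\big)\le\sum_{i=0}^{M-1}V\big(x(i,\xi,h(\xi))\big)\le\hat\kappa(\omega(\xi)),
\]
and then the lower estimate in \eqref{eq:3} yields $\omega(g^j(\xi,h(\xi)))\le\ul\alpha^{-1}\big(\hat\kappa(\omega(\xi))\big)=:\tilde\kappa_j(\omega(\xi))$ with $\tilde\kappa_j\in\K$ for $j=1,\dots,M-1$.

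The index $j=M$ is not reached by \eqref{eq:1}, which only involves the states up to time $M-1$; here I would instead use the contractivity constraint $V(x(M,\xi,h(\xi)))\le\alpha(V(\xi))$ of OCP-1, estimate $V(\xi)\le\ol\alpha(\omega(\xi))$ via \eqref{eq:3}, and apply the lower estimate once more to get $\omega(g^M(\xi,h(\xi)))\le\ul\alpha^{-1}\big(\alpha(\ol\alpha(\omega(\xi)))\big)=:\tilde\kappa_M(\omega(\xi))$ with $\tilde\kappa_M=\ul\alpha^{-1}\circ\alpha\circ\ol\alpha\in\K$. Together with (i) this establishes all requirements of Definition~\ref{def:afcl}, so $h$ is admissible.

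I do not expect a genuine obstacle: the argument is essentially bookkeeping with comparison functions. The two points that need a little care are that \eqref{eq:1} says nothing about the terminal state $x(M,\cdot)$, so the contraction constraint must be invoked separately to control the $M$-th step, and that the nonnegativity of $V$ is precisely what allows a single term of the running-cost sum to be dominated by the whole sum.
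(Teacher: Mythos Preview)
Your proof is correct and follows essentially the same route as the paper: invariance of $\X$ is read off directly from the OCP-1 constraints; for $j=1,\dots,M-1$ you bound a single running-cost term by the full sum, pass to the $q$-sum via~\eqref{eq:1}, and control that sum with~\eqref{eq:3} and the $\K$-bounds $\kappa_i$ of the admissible feedback $q$; and for $j=M$ you invoke the contractivity constraint separately. The only cosmetic difference is the order in which you apply $\ul\alpha^{-1}$ and the sum bound, which is immaterial.
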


  \begin{proof}
      The requirement of invariance of $\X$ is part of the assumption, so
      that we only need to check ${\cal K}$-boundedness of the maps $g^j$
      for $h$. To this end note that for $j=1,\ldots,M-1$ we have
      \begin{multline*}
          \omega(g^j(\xi,h(\xi))) \leq \underline{\alpha}^{-1}\circ
          V(g^j(\xi,h(\xi))) \\ \leq \underline{\alpha}^{-1} \left(\sum_{i=0}^{M-1}  
          V(g^i(\xi,h(\xi))) \right) 
         \leq \underline{\alpha}^{-1} \left(\sum_{i=0}^{M-1}  
          V(g^i(\xi,\hat q(\xi))) \right) \\ \leq \underline{\alpha}^{-1}
        \left( \sum_{i=0}^{M-1}
          \overline{\alpha} \circ \omega
          (g^i(\xi,\hat q(\xi))) \right)  \leq \underline{\alpha}^{-1}
        \left(\sum_{i=0}^{M-1}
           \overline{\alpha}
          \circ\kappa_i(\omega(\xi)) \right),
      \end{multline*}
where the $\kappa_i$ are the functions guaranteed by~\eqref{eq:K-bounded-closed-loop} for the admissible feedback $\hat q$.
Finally, for $j=M$ and all $\xi \in \X$ we have using the constraints of OCP-1 that
\begin{equation*}
\omega(g^M(\xi,h(\xi))) \leq \underline{\alpha}^{-1}\circ V(g^M(\xi,h(\xi))) \leq \underline{\alpha}^{-1}\circ \alpha
          \circ \overline{\alpha}(\omega(\xi)).
\end{equation*}
This shows the assertion.
\end{proof}

Now we show that solving Problem~\ref{prob:OCP-1} provides  an \afcl which renders system~\eqref{eq:uncontrolled-system} asymptotically $\omega$-stable.

\begin{proposition}\label{prop:OCP}
    Consider system~\eqref{eq:uncontrolled-system}  and let a measurement
      function $\omega:\Rn\to\Rp$ as well as a \fsCLF $V:\Rn\to\Rp$ be
      given.  Let $q$ be the \afcl associated with the \fsCLF $V$.  Then
      the \afcl~\eqref{eq:multi-step-mpc-control} obtained from
      Algorithm~\ref{alg:multi-step-mpc} yields an admissible feedback
      which asymptotically $\omega$-stabilizes the set ${\cal A} :=
      \omega^{-1}(0)$.
\end{proposition}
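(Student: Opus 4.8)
The plan is to show that the feedback $\hat q$ defined in~\eqref{eq:multi-step-mpc-control} is both admissible and yields the asymptotic $\omega$-stability estimate, by reducing everything to already established facts. Admissibility is essentially immediate: the optimal input ${\bf u}_M^*$ satisfies all the constraints of OCP-1 by construction, and since $q$ itself is a feasible input for OCP-1 (it satisfies the state/input constraints because it is an \afcl, and it satisfies the contraction constraint~\eqref{eq:decayV} by the definition of a \fsCLF), optimality gives $\sum_{i=0}^{M-1} V(x(i,\xi,\hat q(\xi))) \le \sum_{i=0}^{M-1} V(x(i,\xi,q(\xi)))$. Hence Lemma~\ref{lem:feedback-admissible} applies verbatim and $\hat q$ is an \afcl.

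Next I would establish the decay estimate. Since $\hat q$ satisfies the contraction constraint of OCP-1, we have $V(x(M,\xi,u_{\hat q})) \le \alpha(V(\xi))$ for all $\xi\in\X$, with $\alpha\in\Kinf$, $\alpha<\id$. Together with the sandwich bound~\eqref{eq:3}, $\ul\alpha(\omega(\xi)) \le V(\xi) \le \ol\alpha(\omega(\xi))$, this means $V$ is a finite-step Lyapunov function for the closed-loop system~\eqref{eq:controlled-system} driven by $\hat q$. At this point the conclusion follows exactly as in Proposition~\ref{prop:P}: the invariance of $\X$ under the maps $g^j(\cdot,\hat q(\cdot))$ (part of admissibility, just shown) combined with the finite-step decay of $V$ lets us invoke~\cite[Theorem~7]{Noroozi.2018.SG} to obtain a $\KL$-estimate $\omega(x(t,\xi,u_{\hat q})) \le \beta(\omega(\xi),t)$ for all $\xi\in\X$, $t\in\N$, which is precisely asymptotic $\omega$-stabilization of ${\cal A}=\omega^{-1}(0)$.

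I do not anticipate a serious obstacle here; the work has all been front-loaded into Lemma~\ref{lem:feedback-admissible}, Proposition~\ref{prop:P}, and the cited converse-type result. The one point requiring a little care is verifying that $q$ is genuinely a \emph{feasible} point of OCP-1 — i.e.\ that the running-cost comparison inequality~\eqref{eq:1} is available — since Lemma~\ref{lem:feedback-admissible} is stated with $\hat q$ (written there as $\hat q$) already in place. This is where one uses that $q$ is the \afcl furnished by the definition of the \fsCLF $V$: its defining properties (i)--(ii) in Definition~\ref{def:afcl} give constraint feasibility except for the contraction inequality, and~\eqref{eq:decayV} supplies exactly that last inequality. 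A secondary subtlety, already flagged in the paper's remark after Problem~\ref{prob:OCP-1}, is the tacit assumption that an optimal ${\bf u}_M^*$ exists; if it does not, one runs the same argument with a near-optimal input satisfying~\eqref{eq:1} up to an arbitrarily small additive slack, which does not affect the $\KL$-conclusion. I would state the proof compactly, citing Lemma~\ref{lem:feedback-admissible} for admissibility and then \cite[Theorem~7]{Noroozi.2018.SG} (as in Proposition~\ref{prop:P}) for the stability estimate.
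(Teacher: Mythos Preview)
Your proposal is correct. The admissibility part is identical to the paper's: feasibility of $q$ for OCP-1 plus optimality of ${\bf u}_M^*$ yields the running-cost inequality, and Lemma~\ref{lem:feedback-admissible} then gives that $\hat q$ is an \afcl.

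For the stability part you take a genuinely shorter route than the paper. You observe that the contraction constraint in OCP-1 makes $(V,\hat q)$ itself a valid \fsCLF/\afcl pair, so Proposition~\ref{prop:P} (equivalently \cite[Theorem~7]{Noroozi.2018.SG}) applies directly and delivers the $\KL$-estimate. The paper instead carries out the $\KL$-construction by hand: it bounds the intermediate values $\omega(x(kM+j,\xi,{\bf u}_{\hat q}))$ via the cost comparison~\eqref{eq:suffOCP} and the $\K$-bounds $\kappa_i$ of the reference feedback $q$, iterates the contraction to get $V(x(kM,\xi,{\bf u}_{\hat q}))\le\alpha^k(V(\xi))$, introduces an $M$-th compositional root $\alpha^{1/M}$, and assembles an explicit $\beta\in\KL$. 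Your approach is more economical and more modular; the paper's buys an explicit formula for $\beta$ in terms of $\alpha$, $\ul\alpha$, $\ol\alpha$ and the $\kappa_i$, which can be useful for quantitative purposes but is not needed for the proposition as stated.
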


\begin{proof}
The feasibility of the Problem~\ref{prob:OCP-1} is guaranteed by the existence of the \afcl $q$ generated by the \fsCLF $V$ and our standing assumption that maximizing arguments exist.
It follows from~\eqref{eq:OCP1} that for all $\xi \in \X$
\begin{equation} 
\label{eq:suffOCP}
\sum_{i=0}^{M-1} V\big(x(i,\xi,{\bf u}_{\hat q})\big) \leq \sum_{i=0}^{M-1} V\big(x(i,\xi,{\bf u}_q)\big) 
\end{equation}
and Lemma~\ref{lem:feedback-admissible} the feedback defined by
\eqref{eq:multi-step-mpc-control} is admissible.
Take any $\xi \in \X$.
For any $t = k M + j$, $k \in \N$, $j \in \{ 0 ,\dots, M - 1 \}$ we have
\begin{align}
 x(t,\xi,{\bf u}_{\hat q}) & =  x(j,x(k M,\xi, {\bf u}_{\hat q}),{\bf u}_{\hat q}(\cdot+kM)) . \label{eq:OCP-proof-1}
\end{align}
With \eqref{eq:suffOCP} we obtain
\begin{align} \label{eq:OCP-proof-2}
\begin{split}
&\sum_{i=0}^{M-1} V\big(x(i,x(k M,\xi,{\bf u}_{\hat q}),{\bf u}_{\hat q})\big) \\
&\hspace{2cm}\leq \sum_{i=0}^{M-1} V\big(x(i,x(k M,\xi,{\bf u}_{\hat q}),{\bf u}_q)\big) .
\end{split}
\end{align}
Moreover, it follows from~\eqref{eq:K-bounded-closed-loop} and~\eqref{eq:3} that
\begin{align}
&\sum_{i=0}^{M-1} V\big(x(i,x(k M,\xi,{\bf u}_{\hat q}),{\bf u}_q)\big) \nonumber\\
& \hspace{1cm}\leq M \max\Big\{ \ol\alpha\big(\omega(x(k M,\xi,{\bf u}_{\hat q}))\big) , \nonumber\\
& \hspace{2cm}\max_{i\in\{1,\dots,M-1\}} \ol\alpha\circ\kappa_i \big( \omega\big(x(k M,\xi,{\bf u}_{\hat q})\big) \big) \Big\} \nonumber\\
& \hspace{1cm}=: \kappa \big( \omega \big(x(k M,\xi,{\bf u}_{\hat q})\big) \big) . \label{eq:OCP-proof-3}
\end{align}
It follows from~\eqref{eq:OCP-proof-2} and~\eqref{eq:OCP-proof-3} that for all $ i \in \{0,\dots,M-1\}$
\begin{align}\label{eq:upper-bound-sum}
\sum_{i=0}^{M-1} V\big(x(i,x(k M,\xi,{\bf u}_{\hat q}),u_{\hat q})\big) \leq  \kappa \big( \omega \big(x(k M,\xi,{\bf u}_{\hat q})\big) \big) .
\end{align}
It follows from the first inequality of~\eqref{eq:3} that for all $ i \in
\{0,\dots,M-1\}$
\begin{align*}
\omega\big(x(i,x(k M,\xi,{\bf u}_{\hat q}),{\bf u}_q)\big) & \leq \ul\alpha^{-1}\circ \kappa \big( \omega \big(x(k M,\xi,{\bf u}_{\hat q})\big) \big) \\
& =: \gamma \big( \omega \big(x(k M,\xi,{\bf u}_{\hat q})\big) \big).
\end{align*}
For $M>0$ we now denote by $\alpha^{1/M}:=\chi\in\Kinf$ a fixed solution of the equation $\chi^M=\alpha$, which exists by~\cite[Proposition~3.1]{Geiselhart.2014b}, though it may not be unique.
Then for $t\geq 0$, the function $\alpha^{t/M} \in \Kinf$ is the $t$-fold composition of $\chi$. As $\alpha < \id$, it follows that $\alpha^{1/M} < \id$, because the condition $\alpha^{1/M}(r)\geq r$ leads by induction to $\alpha^{(t+1)/M}(r)\geq \alpha^{t/M}(r) \geq r$, $t\in \N$. But the latter condition for $t=M$ implies that $\alpha(r) \geq r$, whence $r=0$.

Now as $\alpha^{1/M} < \id$, it follows for all $r>0$ that the map $t\mapsto \alpha^{t/M}(r), t \in \N$ is strictly decreasing to $0$ as $t\to \infty$.
As the map is strictly decreasing we may interpolate linearly in each interval $[t,t+1], t\in \N$, to obtain a strictly decreasing map defined on all of $[0,\infty)$.
With slight abuse of notation we continue to call this map $\alpha^{\cdot/M}(r)$. With this convention, the function $(r,t) \mapsto \alpha^{t/M}(r)$ is in $\mathcal{KL}$.
Also with the decomposition $t=k M+j$, $j\in \{ 0, \ldots, M-1 \}$, we obtain that
\[ \alpha^{t/M}\circ\alpha^{-1} = \alpha^k\circ\alpha^{-(M-j)/M} \ge\alpha^k.\]
From the last two inequalities we can conclude 
\begin{align*}
\omega \big(x(t,\xi,{\bf u}_{\hat q})\big) & \leq \gamma \big(\omega(x(k M,\xi,{\bf u}_{\hat q}))\big) \\
& \leq \gamma \circ \ul\alpha^{-1} \big( V( x(k M,\xi,{\bf u}_{\hat q}) ) \big) \nonumber\\
& \leq \gamma \circ \ul\alpha^{-1} \circ \alpha^k \big( V(\xi) \big) \nonumber\\
& \leq \gamma \circ \ul\alpha^{-1} \circ \alpha^k \circ \ol\alpha \big( \omega (\xi) \big) \\
& \leq \gamma \circ \ul\alpha^{-1} \circ \alpha^{\frac{t}{M}} \circ \alpha^{-1} \circ \ol\alpha \big( \omega ( \xi ) \big) \\
& =: \beta \big(\omega(\xi) , t\big) .
\end{align*}
It is easy to see that $\beta \in \KL$, as $\alpha^{\cdot/M}(\cdot)$ is.
See also~\cite[Lemma 4.2]{Khalil.2002} for a discussion of the necessary details.
\end{proof}

We note that one has to make some standard convexity assumption on the dynamics $g$ to guarantee OCP-1 is numerically solvable via existing algorithms.
We emphasize that OCP-1 needs \emph{no} knowledge of an admissible control $q$.
The difficultly in the computation of ${\bf u}_{\hat q}$ via OCP-1 is,
however, the need for the knowledge of a \fsCLF beforehand and the choice
of a suitable time-step.
As discussed in Section~\ref{sec:measurement-functions-as-fsfcs}, a \fsCLF
candidate can be chosen as the corresponding measurement function for
which only the time-step $M$ remains to be determined.

\subsection{\fsCLF-based contractive updated multi-step MPC}\label{sec:re-optimization}

An obvious drawback of the control scheme proposed by Proposition~\ref{prop:OCP} is that it only communicates with the sensor every $M$ time-steps.
Hence, the control loop is closed less often than that for a classic closed-loop control, which may make the system less robust with respect to perturbations.
As shown in~\cite{Grune.2015,Grune.2014b}, a remedy to this problem is to re-compute the remaining part of the optimal control sequence at each time instant.
This amounts to solving an optimal control problem with shortened horizon.

\begin{problem}\label{prob:OCP-2}
Consider system~\eqref{eq:uncontrolled-system}.
Let a measurement function $\omega$,  $M\in \N^*$ and a \fsCLF $V:\Rn\to\Rp$ with associated decay function $\alpha\in\Kinf$, $\alpha<\id$ be given.
Furthermore, let $j \in \{1,\ldots,M\}$.
For a given initial value $\tilde \xi \in \X$ consider a control sequence
$ \mathbf{\tilde u}=(\tilde u_0,\ldots, \tilde u_{M-j-1})$  satisfying
$x(M-j,\tilde \xi, \mathbf{\tilde u}) \in \X$. Define $x(0) = \xi:= x(M-j,\tilde \xi, \mathbf{\tilde u})$.
Compute ${\bf u}_j=\big( u_1 (\xi), \ldots, u_j (\xi) \big)$
as the optimal solution the following \emph{optimal control problem} 
\begin{equation} \label{eq:OCP2} \tag{OCP-$2_j$}
\begin{split}
& \min_{\textbf{u}_j=(u_j(0), \ldots, u_j(j-1))} \, \, \sum_{i=0}^{j-1} V\big(x(i,\xi,{\bf u}_j)\big)\\
\mathrm{s.t.} & \mathrm{\, for \, all \, }  \ell \in \{0,...,j-1\} \\
&  \left\{ \begin{array}{rcll} 
x(\ell+1)&=&g(x(\ell),u_{j}(\ell))  \\
u_j(\ell)&\in& \U \\
g (x(\ell),u_j(\ell)) &\in& \X  \\
V(x(j,\xi,{\bf u}_j)) &\leq& \alpha (V(\tilde \xi)) . 
\end{array} \right. 
\end{split}
\end{equation}
\end{problem}

Note that feasibility of Problem~\ref{prob:OCP-2} depends, among others, on the initial control sequence $\mathbf{\tilde u}$. However, it is not hard to see that if we consider a control sequence $\mathbf{\hat u}=(\hat u_1, \ldots, \hat u_M)$ solving Problem~\ref{prob:OCP-1}, then for any $j=1,\ldots,M$ and initial control sequence $\mathbf{\tilde u}:= (\hat u_1, \ldots, \hat u_{M-j})$ a solution of Problem~\ref{prob:OCP-2} is given by $\mathbf{u_j}=(\hat u_{M_j+1}, \ldots, \hat u_{M})$.
The idea is to iteratively solve Problem~\ref{prob:OCP-2} and only to apply the first control value to shrink the horizon by one.
The algorithm for such a control strategy is formalized as follows.

\begin{algorithm} \label{alg:re-opt-step-mpc}
At each time step $t = k M+j$, $k\in \N$, $j=0,\ldots,M-1$:
\begin{enumerate}[1)]
\item Measure the state $x(t) \in \X$ of system~\eqref{eq:uncontrolled-system}.
\item Set $\xi := x(t)$, $j=j(t)$ and solve Problem~\ref{prob:OCP-2} and denote the optimal control sequence satisfying~\eqref{eq:OCP2} by ${\bf u}_j^*$.
\item Set the control value to
\begin{align} \label{eq:re-opt-mpc-control}
u(k)=\mathbf{u}_j^*(0)
\end{align}
and apply it to system~\eqref{eq:uncontrolled-system} at time $t=k M+j$.
\item Go to Step 1.
\end{enumerate}
\end{algorithm}

\begin{remark}\label{rem:OP}
We note that by the optimality principle~\cite[Corollary 3.16]{Grune.2017a} the solutions to Algorithm~\ref{alg:multi-step-mpc} and Algorithm~\ref{alg:re-opt-step-mpc} \emph{coincide} in the absence of perturbations.
\end{remark}

To illustrate the two proposed algorithms, we give an example.

\begin{example}  \label{ex:mstep_mpc}

Here we consider an illustrative numerical example for which we compare the two different MPC approaches. Since both algorithms produce identical results in the case without perturbations, we compare them for the situation in which the controller is derived by optimizing over the nominal, i.e., unperturbed system but then applied to a perturbed system. We consider the nominal system described by
\begin{align}\label{eq:nominal}
\begin{array}{rcl}
x_1^+ &=&x_1+x_2 \\
x_2^+ &=&x_2+x_3 \\
x_3^+ &=& \frac{3}{2} x_3+u 
\end{array}
\end{align}
and the corresponding perturbed system
\begin{align}\label{eq:perturbed}
\begin{array}{rcl}
x_1^+ &=&x_1+x_2 +0.1\sin(k/4) \\
x_2^+ &=&x_2+x_3 \\
x_3^+ &=& \frac{3}{2} x_3+u .
\end{array}
\end{align}
Note that the nominal system~\eqref{eq:nominal} is open-loop unstable. 

Motivated by the converse Lyapunov function result in Theorem~\ref{thm:converse-Lyap} we start by considering the candidate \fsCLF $V(x)=x^\top P x$ with
$$
P = \left(\begin{array}{ccc}
1 &  0 & 0.25 \\
0  & 1 & 0.25 \\
0.25 & 0.25 & 1
\end{array}\right) ,
$$
which is obviously of the form~\eqref{eq:converse-Lyap}. 
This choice of the matrix $P$ contains cross terms between the states. 
It is easy to check that the function $V$ thus defined is an $M$-step Lyapunov function for $M=3$.
However, in order to obtain more pronounced differences between Algorithms~\ref{alg:multi-step-mpc} and \ref{alg:re-opt-step-mpc}, we used $M=6$ in the simulations.
Moreover, we used $\alpha(r)=0.9r$ in both Problem \ref{prob:OCP-1} and \ref{prob:OCP-2} and all simulations were performed with the initial condition $\xi=(-1, 1, 1)^T$.

Figure~\ref{fig:nominal} illustrates the state trajectories corresponding to the nominal case for Algorithm~\ref{alg:multi-step-mpc}.

\begin{figure}[!ht]
\centering
\includegraphics[width=0.49\textwidth]{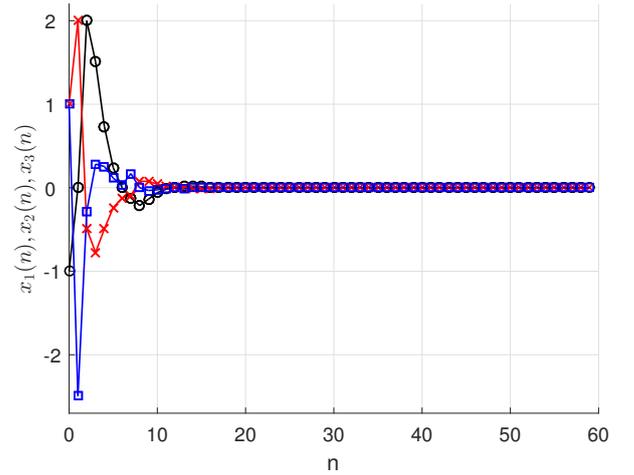}
\vspace{-0.6cm}
\caption{State trajectories of the nominal system~\eqref{eq:nominal}, $x_1$ (black~$\pmb\circ$), $x_2$ (red ~$\color{red}\pmb\times$), $x_3$ (blue $\color{blue}\pmb\square$) with control input computed via Algorithm~\ref{alg:multi-step-mpc}. }
\label{fig:nominal}
\end{figure}

The case in which the control sequence computed by
Algorithm~\ref{alg:multi-step-mpc} is applied to the perturbed
system~\eqref{eq:perturbed} is depicted by Figure~\ref{fig:perturbed}. One
clearly sees that the $x_1$-component, in which the perturbation enters in
\eqref{eq:perturbed}, is more strongly affected by the perturbation than the other components of the solution.
\begin{figure}[!ht]
\centering
\includegraphics[width=0.49\textwidth]{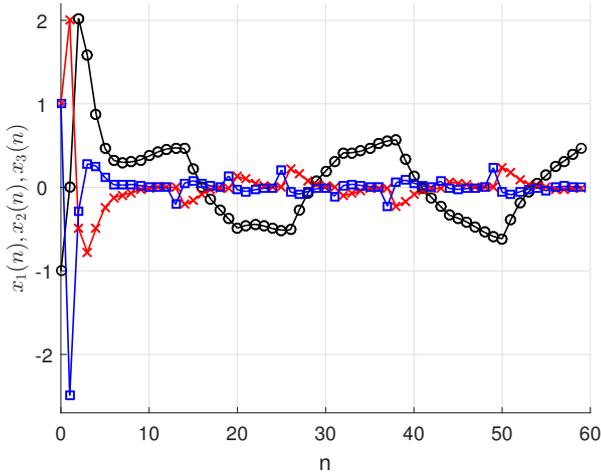}
\vspace{-0.6cm}
\caption{State trajectories of the perturbed system~\eqref{eq:perturbed}, $x_1$ (black~$\pmb\circ$), $x_2$ (red $\color{red}~\pmb\times$), $x_3$ (blue~$\color{blue} \pmb\square$) with control input computed via Algorithm~\ref{alg:multi-step-mpc}. }
\label{fig:perturbed}
\end{figure}

Finally, Figure~\ref{fig:updated} illustrates the state trajectories
associated with the shrinking horizon strategy with re-optimization, i.e., Algorithm~\ref{alg:re-opt-step-mpc} applied to the perturbed system~\eqref{eq:perturbed}.
It may be observed that the re-optimization on shrinking horizons is able to mitigate the effect of the perturbation, as the maximal deviation of the $x_1$-component from the desired equilibrium $x_1=0$ after the transient phase is reduced by about 37\%, from 0.615 to 0.387.
\begin{figure}[!ht]
\centering
\includegraphics[width=0.49\textwidth]{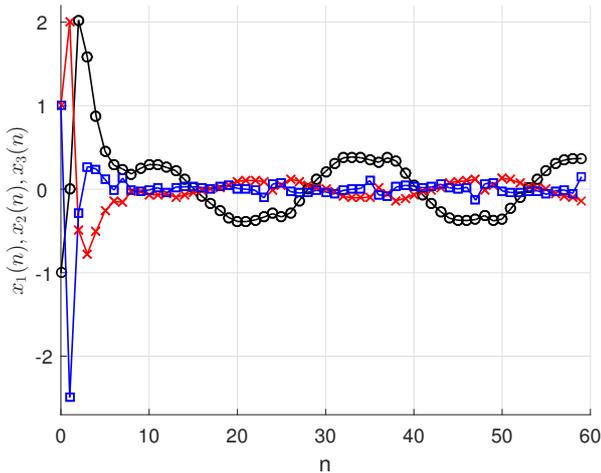}
\vspace{-0.6cm}\centering
\caption{State trajectories of the perturbed system~\eqref{eq:perturbed}, $x_1$ (black~$\pmb\circ$), $x_2$ (red~$\color{red}\pmb\times$), $x_3$ (blue~$\color{blue}\pmb\square$) with control input computed via Algorithm~\ref{alg:re-opt-step-mpc}. }
\label{fig:updated}
\end{figure}
\end{example}

\subsection{\fsCLF-based classic MPC} \label{sec:classic-MPC-without-terminal-constraints}

The shrinking horizon method is a rather unusual way of obtaining a feedback law via optimization based techniques.
More commonly, one would use a classic MPC approach, in which the optimization is performed at every time step over a \emph{fixed} horizon length $N$ and always the \emph{first} element of the resulting control sequence is implemented.
In this section we show that this approach can also be applied using fs-CLFs.
To this end, we consider the following optimal control problem.

\begin{problem}\label{prob:OCP-3}
Consider system~\eqref{eq:uncontrolled-system} and let $M,N\in \N^*$.
Let $\omega$ be a measurement function and $V$ be a \fsCLF with the associated decay function $\alpha\in\Kinf$, $\alpha<\id$ be given.
Also, let $x(0) =: \xi \in \X$.
Compute ${\bf u}_N^*=\big( u_0^* (\xi), \ldots, u_{N-1}^* (\xi) \big)$
as the optimal solution of  the following \emph{optimal control problem} \emph{(OCP-3)}
\begin{equation} \label{eq:OCP3} \tag{OCP-3}
\begin{split}
\min_{\textbf{u}_N} \, & \, \sum_{i=0}^{N-1} V\big(x(i,\xi,{\bf u}_N)\big) \\
\mathrm{s.t.}& \, \mathrm{\, for \, all \, } j \in \{0,\dots,N-1\} \\
 & \left\{ \begin{array}{rcll} 
x(j+1)&=&g(x(j),u_{j})  \\
u_k&\in& \U \\
g (x(j),u_j) &\in& \X  .
\end{array} \right. 
\end{split}
\end{equation}
\end{problem}

Here we make use of the feedback signal at \emph{every} time step.
To do this, one can solve Problem~\ref{prob:OCP-3} every single time-step and apply the \emph{first} element of the corresponding optimal control sequence ${\bf u}_N$ to the system and then the \eqref{eq:OCP3} is solved again.
This procedure is summarized by the following algorithm.
\begin{algorithm} \label{alg:classic-mpc}
At each time step $t \in \N$:
\begin{enumerate}[1)]
\item Measure the state $x(t) \in \X$ of system~\eqref{eq:uncontrolled-system}.
\item Set $\xi := x(t)$, solve Problem~\ref{prob:OCP-3} and denote the optimal control sequence satisfying~\eqref{eq:OCP3} by ${\bf u}_N^*$.
\item Define the MPC-feedback value $\hat q_{\mathrm{MPC}}$ by
\begin{align} \label{eq:mpc-control}
\hat q_{\mathrm{MPC}} (\xi) := u_0^* (\xi)
\end{align}
and apply it to system~\eqref{eq:uncontrolled-system}.
\item Go to Step 1.
\end{enumerate}
\end{algorithm}

The solution to the resulting MPC closed-loop system starting from some
initial value $\xi$ and with optimization horizon $N$ is denoted by $x_{\mathrm{MPC}(N)}(\cdot,\xi)$.
We denote the \emph{optimal value function} related to Problem~\ref{prob:OCP-3} by
\begin{align}\label{eq:OCP-value-function}
 & V_N (\xi)  := \sum_{i=0}^{N-1} V\big(x(i,\xi,{\bf u}_N^*)\big) . 
\end{align}
In order to analyze the resulting MPC closed-loop system, we make use of the following result.

\begin{definition}
We say that the MPC scheme described in Algorithm~\ref{alg:classic-mpc} is semiglobally practically
asymptotically $\omega$-stabilizing with respect to the optimization horizon
$N$ in ${\cal A} := \omega^{-1}(0)$, if  there exists $\beta \in \KL$ such that the following property holds: for each $\delta> 0$ and $\Delta>\delta$ there exists $N_{\delta,\Delta} \in\N^*$ such that for all
optimization horizons $N\geq N_{\delta,\Delta}$ and all $\xi\in\R^n$ with $\omega(\xi)<\Delta$ the closed-loop solutions $x_{\mathrm{MPC}(N)}(\cdot,\xi)$ satisfy
\begin{equation*}
 \omega\big(x_{\mathrm{MPC}(N)}(t,\xi)\big) \leq \max\left\{\beta\big(\omega(\xi), t \big),\,\delta\right\} \qquad \forall t \in \N .
\end{equation*}
\end{definition}

\begin{proposition}\label{prop:MPCstab}
Let $\omega:\Rn\to\Rp$ be a measurement function and $V$ be a \fsCLF. Assume that there is a $\K_\infty$-function $\sigma$ such that the optimal value function $V_N$ from \eqref{eq:OCP-value-function} satisfies
\[ V_N(\xi) \le \delta(V(\xi)),\quad \forall x\in \X, N\in\N^*. \]
Then the MPC scheme obtained from Algorithm~\ref{alg:classic-mpc} is semiglobally practically asymptotically $\omega$-stabilizing  in ${\cal A} := \omega^{-1}(0)$ for system~\eqref{eq:uncontrolled-system} with respect to the optimization horizon $N$.
If, moreover, $\sigma$ is a linear function, i.e., $\sigma(r)=\gamma r$ for some $\gamma\in\R$, then the resulting MPC closed-loop is asymptotically $\omega$-stable in ${\cal A}:= \omega^{-1}(0)$ for all $N > 2+\frac{\ln(\gamma -1)}{\ln \gamma - \ln (\gamma-1)}$. 
\end{proposition}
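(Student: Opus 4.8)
The plan is to deduce both assertions from the relaxed-dynamic-programming / MPC-without-terminal-cost machinery of \cite{Gruene.2009,Gruene.2010} (cf. \cite[Section~6]{Grune.2017a}). The starting observation is that the \fsCLF $V$ plays the role of the running cost $\ell$ in an MPC problem, so that OCP-3 is a standard finite-horizon optimal control problem with stage cost $\ell = V$ and optimal value function $V_N$. Since $V$ itself satisfies $\ul\alpha(\omega(\xi))\le V(\xi)\le\ol\alpha(\omega(\xi))$ by \eqref{eq:3}, and since trivially $V_N(\xi)\ge V(\xi)$ (the sum's first term), the hypothesis $V_N(\xi)\le\sigma(V(\xi))$ is precisely the controllability/bounded-value-function condition that drives the analysis: it says the optimal value function is bounded by a $\Kinf$-function of the stage cost at the initial state. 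First I would make this identification explicit and record that $\ul\alpha^{-1},\ol\alpha,\sigma$ give the comparison functions relating $\omega$, $V$, and $V_N$.

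Next I would invoke the key estimate from \cite{Gruene.2010} (see also \cite[Prop.~6.17 / Thm.~6.18]{Grune.2017a}): from the bound $V_N\le\sigma(V)$ one extracts, via the optimality principle applied to the tail of the optimal sequence, a relaxed Lyapunov inequality of the form
\begin{equation*}
V_N\big(x_{\mathrm{MPC}(N)}(t+1,\xi)\big) \le V_N\big(x_{\mathrm{MPC}(N)}(t,\xi)\big) - \alpha_N\, V\big(x_{\mathrm{MPC}(N)}(t,\xi)\big),
\end{equation*}
where the "degree of suboptimality" $\alpha_N\in(0,1]$ is computed from $\sigma$ alone and satisfies $\alpha_N\to 1$ as $N\to\infty$ in the nonlinear case, while in the linear case $\sigma(r)=\gamma r$ one has the closed formula $\alpha_N = 1 - \frac{(\gamma-1)^{N-1}}{\gamma^{\,N-1}-(\gamma-1)^{N-1}}$ (the standard Gr\"une--Rantzer expression), so that $\alpha_N>0$ exactly when $N>2+\frac{\ln(\gamma-1)}{\ln\gamma-\ln(\gamma-1)}$. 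Given $\alpha_N>0$, $V_N$ is a genuine Lyapunov function for the MPC closed loop: combining the relaxed inequality with $V\ge\ul\alpha\circ\omega$ on the right and $\ul\alpha\circ\omega\le V\le V_N\le\sigma\circ\ol\alpha\circ\omega$ on both sides yields a decay of $V_N$ along trajectories in terms of $\omega$, hence asymptotic $\omega$-stability in ${\cal A}=\omega^{-1}(0)$ by the standard comparison-function argument (as in Proposition~\ref{prop:OCP}).

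For the semiglobal practical statement, when $\alpha_N$ is only guaranteed positive for $N$ large but the bound $V_N\le\sigma(V)$ is nonlinear, the same relaxed inequality holds with an $N$-dependent $\alpha_N$ that need not be bounded away from $0$ uniformly; here one runs the practical version of the argument: given $\delta>0,\Delta>\delta$ one chooses $N_{\delta,\Delta}$ so that $\alpha_N$ is positive on the sublevel set $\{\omega\le\Delta\}$ down to level $\delta$, obtains the decay of $V_N$ until the trajectory enters $\{\omega\le\delta\}$, shows forward invariance (up to a fixed factor) of a slightly larger sublevel set, and assembles the $\max\{\beta(\omega(\xi),t),\delta\}$ estimate. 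Throughout one must check that the constraint $g(x,u)\in\X$ and controlled invariance of $\X$ make OCP-3 feasible along the closed loop; feasibility of OCP-3 itself follows since admissibility of the trajectory is the only constraint and controlled invariance of $\X$ is the standing assumption. The main obstacle is purely bookkeeping: verifying that the cited estimates from \cite{Gruene.2009,Gruene.2010,Grune.2017a}, which are stated for costs/stability measured by a norm-like $\ell$, transfer verbatim to the measurement-function setting with a possibly non-compact target set ${\cal A}$ and a possibly discontinuous closed loop — this is exactly the technical point already flagged in the remark after Definition~\ref{def:ISS-stabilization}, and it is handled by using $\ul\alpha\circ\omega$ and $\ol\alpha\circ\omega$ as the sandwiching bounds rather than $\omega$ directly, so that all comparison arguments stay within the $\Kinf/\KL$ calculus.
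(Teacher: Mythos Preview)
Your proposal is correct and follows essentially the same route as the paper's own proof: both invoke the relaxed-dynamic-programming machinery from \cite{Gruene.2009,Gruene.2010} and \cite[Chapter~6]{Grune.2017a}, identify $V$ as the stage cost, use the hypothesis $V_N\le\sigma(V)$ together with the trivial lower bound $V\le V_N$ to sandwich $V_N$ and make it a (practical) Lyapunov function for the closed loop, and obtain the explicit horizon bound in the linear case from the standard suboptimality formula. The paper's proof is in fact more terse than yours---it simply points to \cite[Theorem~6.37]{Grune.2017a} for the semiglobal practical part and to \cite[Corollary~6.21, Remark~6.22]{Grune.2017a} for the linear case, then records the sandwich $V\le V_N\le\delta(V)$ to conclude---so your sketch is a faithful unpacking of exactly those references.
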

\begin{proof}
The first statement is proved by following similar arguments as those in~\cite[Theorem 6.37]{Grune.2017a}.
For the second statement, see~\cite[Corollary 6.21 and Remark 6.22]{Grune.2017a}.
We note that Theorem 6.37 and Corollary 6.21 in~\cite{Grune.2017a} consider stabilization at an equilibrium point.
However, it is not hard to see that with the obvious modification of the arguments in these references we obtain that
\[ V_N\big(x_{\mathrm{MPC}(N)}(t,\xi))\big) \le \max\{\tilde\beta(V_N(\xi),i),\tilde\delta\} \]
for all $\xi$ with $V_N(\xi)\le \widetilde\Delta$, $\widetilde\Delta>0$, $\tilde\beta\in\KL$, and $\tilde\delta\ge 0$, where $\widetilde\Delta\to\infty$ and $\tilde\delta\to 0$ as $N\to\infty$. Moreover, the inequality holds for arbitrarily large $\widetilde\Delta>0$ and $\tilde\delta=0$ if $\sigma$ is linear.
Now, the inequalities 
\[ V(\xi) \le V_N(\xi) \le \delta(V(\xi)),\]
which follow by definition of $V_N$ and by the assumption of the proposition, imply that $V_N$ is a Lyapunov function for the closed loop, which proves (practical) asymptotic stability.
\end{proof}

In order to check whether the optimal value function \eqref{eq:OCP-value-function} satisfies the conditions in Proposition~\ref{prop:MPCstab}, we make the following observation: From \eqref{eq:3} and \eqref{eq:K-bounded-closed-loop} it follows that there exists a $\K_\infty$-function $\hat\kappa$ such that for each admissible finite-step feedback control law $q:\X\to\U^M$ the inequality
\begin{equation} V\Big(g^i\big(\xi,q_1(\xi),\ldots,q_i(\xi)\big)\Big) \le \hat\kappa(V(\xi)) \label{eq:Vtransient}\end{equation}
holds for all $i=1,\ldots,M-1$. This fact is easily verified for $\hat\kappa(r) = \max_{i=1,\ldots,M-1} \overline\alpha\circ\kappa_i\circ\underline\alpha^{-1}(r)$.
Now we give the main result of this section.

\begin{theorem}\label{thm:OCP-3a}
Consider system~\eqref{eq:uncontrolled-system} and let $M,N\in \N^*$.
Let $\omega:\Rn\to\Rp$ be a measurement function and $V$ be a \fsCLF for the step size $M$.
Then, the following statements hold.
\begin{enumerate}
\item[(i)] If in \eqref{eq:decayV} $\alpha(s)=cs$ with $c\in[0,1)$ and in
  \eqref{eq:Vtransient} $\hat\kappa(r)=dr$ with $d>0$, then the MPC closed-loop is asymptotically $\omega$-stable with respect to the optimization horizon $N$ in ${\cal A} := \omega^{-1}(0)$ for all $N > 2+\frac{\ln(\gamma -1)}{\ln \gamma - \ln (\gamma-1)}$ with $\gamma=Md/(1-c)$.
\item[(ii)] If  in \eqref{eq:decayV} $\alpha(s)=cs$ with $c\in[0,1)$ and
  $\hat\kappa$ in \eqref{eq:Vtransient} satisfies $\hat\kappa(r)\le q\max
  \{r^a,r^b\}$ for constants $a,b,q>0$, then the MPC scheme is semiglobally practically asymptotically $\omega$-stabilizing with respect to the optimization horizon $N$ in ${\cal A}$.
\item[(iii)] There exists $\rho\in\K_\infty$ such that if we replace $V$
  by $\widetilde V = \rho(V)$ in Problem~\ref{prob:OCP-3}, then the MPC scheme is semiglobally practically asymptotically $\omega$-stabilizing with respect to the optimization horizon $N$ in ${\cal A}$.
\end{enumerate}
\end{theorem}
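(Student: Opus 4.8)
The plan is to reduce all three statements to Proposition~\ref{prop:MPCstab} by establishing, \emph{uniformly in} $N$, a controllability bound $V_N(\xi)\le\sigma(V(\xi))$ with $\sigma\in\Kinf$ (linear in case~(i)). The natural feasible control to test against is $u_q$, generated by repeatedly applying the \afcl $q$ associated with $V$; since \eqref{eq:OCP3} carries no terminal or contractivity constraint, feasibility of $u_q$ follows from condition~(i) of Definition~\ref{def:afcl}, so $V_N(\xi)\le\sum_{i=0}^{N-1}V(x(i,\xi,u_q))$. Iterating the decay condition~\eqref{eq:decayV} along this trajectory yields $V(x(kM,\xi,u_q))\le\alpha^k(V(\xi))$ for all $k\in\N$, while~\eqref{eq:Vtransient} bounds each of the $M-1$ intermediate points of the $k$-th block by $\hat\kappa(\alpha^k(V(\xi)))$. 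Summing over blocks gives the master estimate
\begin{equation*}
V_N(\xi)\ \le\ \sum_{k=0}^{\infty}\Bigl(\alpha^k(V(\xi))+(M-1)\,\hat\kappa\bigl(\alpha^k(V(\xi))\bigr)\Bigr).
\end{equation*}

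For~(i), with $\alpha(s)=cs$ and $\hat\kappa(r)=dr$, the right-hand side is a geometric series bounded by $\frac{1+(M-1)d}{1-c}V(\xi)$; this is linear with slope at most $\gamma:=Md/(1-c)$ once $d\ge1$ (which may be assumed without loss of generality, as~\eqref{eq:Vtransient} persists under enlarging $\hat\kappa$), and since the threshold in Proposition~\ref{prop:MPCstab} is monotone in the slope, the second statement of Proposition~\ref{prop:MPCstab} gives asymptotic $\omega$-stability for all $N>2+\frac{\ln(\gamma-1)}{\ln\gamma-\ln(\gamma-1)}$. For~(ii), with $\hat\kappa(r)\le q\max\{r^a,r^b\}\le q(r^a+r^b)$ and $\sum_k c^{ka}=\tfrac1{1-c^a}$, $\sum_k c^{kb}=\tfrac1{1-c^b}$, the master estimate becomes $V_N(\xi)\le\frac{V(\xi)}{1-c}+(M-1)q\bigl(\frac{V(\xi)^a}{1-c^a}+\frac{V(\xi)^b}{1-c^b}\bigr)=:\sigma(V(\xi))$; since $a,b,q>0$ and $c\in[0,1)$ this $\sigma$ is a sum of $\Kinf$-functions, hence in $\Kinf$, and the first statement of Proposition~\ref{prop:MPCstab} yields semiglobal practical asymptotic $\omega$-stabilization.

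For~(iii) the master series need not converge when $\hat\kappa$ (resp.\ $\alpha$) is a general $\Kinf$-function below the identity, so no $N$-uniform bound exists for $V$ itself; the remedy is to reparametrize the running cost. The same estimates show $V(x(t,\xi,u_q))\le\max\{\id,\hat\kappa\}\bigl(\alpha^{\lfloor t/M\rfloor}(V(\xi))\bigr)=:\beta_0(V(\xi),t)$ and, after the customary interpolation in $t$ as in the proof of Proposition~\ref{prop:OCP}, $\beta_0\in\KL$. By the Sontag-type $\KL$-characterization (see~\cite{Kellett.2014}) there are $\theta_1,\theta_2\in\Kinf$ with $\beta_0(r,t)\le\theta_1(\theta_2(r)e^{-t})$. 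Put $\rho:=\theta_1^{-1}\in\Kinf$ and $\widetilde V:=\rho(V)$. Then $\widetilde V$ is again a \fsCLF for the step $M$ with the same feedback $q$ --- \eqref{eq:3} holds with $\rho\circ\ul\alpha,\rho\circ\ol\alpha$ and~\eqref{eq:decayV} with decay function $\rho\circ\alpha\circ\rho^{-1}<\id$ --- and
\begin{equation*}
\widetilde V(x(i,\xi,u_q))=\rho\bigl(V(x(i,\xi,u_q))\bigr)\le\rho\bigl(\theta_1(\theta_2(V(\xi))e^{-i})\bigr)=\theta_2(V(\xi))e^{-i}.
\end{equation*}
Hence $\widetilde V_N(\xi)\le\sum_{i=0}^{\infty}\theta_2(V(\xi))e^{-i}=\frac{1}{1-e^{-1}}\theta_2\circ\rho^{-1}(\widetilde V(\xi))=:\tilde\sigma(\widetilde V(\xi))$ uniformly in $N$, with $\tilde\sigma\in\Kinf$, and the first statement of Proposition~\ref{prop:MPCstab} applied to $\widetilde V$ proves~(iii).

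The main obstacle is precisely the uniform-in-$N$ summability needed in~(iii): only after isolating the $\KL$-overshoot $\beta_0$ of the $q$-generated trajectory and absorbing its nonlinear shape into $\rho=\theta_1^{-1}$ does the summand become geometric, which is exactly what Proposition~\ref{prop:MPCstab} requires. Parts~(i) and~(ii) are then the special cases in which this reparametrization is superfluous because the relevant series already converges, with a linear, respectively $\Kinf$, sum.
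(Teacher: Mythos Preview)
Your argument is correct. For parts~(i) and~(ii) you follow the paper's approach almost verbatim: bound $V_N$ against the cost along the trajectory $x(\cdot,\xi,u_q)$, split into blocks of length $M$, use~\eqref{eq:decayV} at block endpoints and~\eqref{eq:Vtransient} for intermediate points, and sum the resulting geometric-type series. Your bookkeeping is slightly sharper (you separate the $j=0$ term and get slope $\tfrac{1+(M-1)d}{1-c}$, then pass to $\gamma=Md/(1-c)$ via $d\ge 1$ and monotonicity of the horizon threshold), but the mechanism is identical.

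Part~(iii) is where you take a genuinely different route. The paper does \emph{not} invoke Sontag's $\KL$-lemma. Instead it applies a Lyapunov-rescaling result (Proposition~3.2 in~\cite{Grune.2014}) to $\widehat V:=\hat\kappa(V)$: since $\widehat V(x(M,\xi,u_q))\le\mu(\widehat V(\xi))$ with $\mu:=\hat\kappa\circ\alpha\circ\hat\kappa^{-1}<\id$, that result furnishes $\rho\in\Kinf$ and $\lambda\in(0,1)$ with $W:=\rho(\widehat V)$ satisfying $W(x(M,\xi,u_q))\le\lambda W(\xi)$; setting $\widetilde V:=\rho(V)$ and $\sigma:=\rho\circ\hat\kappa\circ\rho^{-1}$ then gives $\widetilde V(x(kM+j,\xi,u))\le\lambda^k\sigma(\widetilde V(\xi))$, and the block summation proceeds as in~(i). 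Your approach instead packages the overshoot into $\beta_0\in\KL$, applies the Sontag decomposition $\beta_0(r,t)\le\theta_1(\theta_2(r)e^{-t})$, and takes $\rho:=\theta_1^{-1}$, which directly forces the summands to be geometric in~$i$ rather than in the block index~$k$. Both constructions deliver the same qualitative conclusion; the paper's stays inside the $M$-step Lyapunov machinery and cites a dedicated rescaling lemma, while yours is more self-contained, relying only on the standard $\KL$ factorization and yielding an explicit formula for~$\rho$.
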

\begin{proof}
(i) Iterating \eqref{eq:decayV} yields the existence of a control function ${\bf u}$ satisfying $V(x(kM,\xi,{\bf u})) \le c^k V(\xi) $  and $V(x(kM+j,\xi,{\bf u})) \le d V(x(kM,\xi,{\bf u}))$ for all $k\in\N$ and $j=0,\ldots,M-1$.
Together this yields 
\[ V(x(kM+j,\xi,{\bf u})) \le c^kd V(\xi), \]
which implies for $K\in\N$ such that $KM\ge N$
\begin{eqnarray*} 
V_N(\xi) & \le & \sum_{i=0}^{N-1} V\big(x(i,\xi,{\bf u})\big)  \\
& \leq&  \sum_{k=0}^{K-1} \sum_{j=0}^{M-1} V\big(x(kM+j,\xi,{\bf u})\big)\\
& \le & \sum_{k=0}^{K-1} M c^kd V(\xi) \; \le \; \frac{Md}{1-c} V(\xi).
\end{eqnarray*}
Now the second part of Proposition \ref{prop:MPCstab} with $\delta(r) =
\frac{Md}{1-c} r$ yields the claim.

(ii) Iterating \eqref{eq:decayV}, as in (i) we obtain the inequality
\begin{equation*} 
\begin{split}
V(x(kM+j,\xi,{\bf u})) &\le q \max\{ (c^k V(\xi))^a,(c^k V(\xi))^b\} \\
& \le q\max\{c^a,c^b\}^k\max\{V(\xi)^a,V(\xi)^b\}. 
\end{split}
\end{equation*}
Abbreviating $\hat c= \max\{c^a,c^b\}\in[0,1)$ the same computation as in (i) yields
\begin{equation*} 
\begin{split} V_N(\xi) &\le \sum_{k=0}^{K-1} M q \hat c^k \max\{V(\xi)^a,V(\xi)^b\} \\
&\le \frac{Mq}{1-\hat c}\max\{V(\xi)^a,V(\xi)^b\}. 
\end{split}
\end{equation*}
This implies that the assumptions of the first part of Proposition \ref{prop:MPCstab} are satisfied with $\delta(r) = \frac{Mq}{1-\hat c}\max\{r^a,r^b\}$ and the claim follows.

(iii) Recall $\hat{\kappa}$ from \eqref{eq:Vtransient}. From \eqref{eq:decayV} it follows that 
\[ \hat\kappa ( V(x(M,\xi,{\bf u}_q))) \le \hat\kappa \circ \alpha (V(\xi)) = \underbrace{\hat\kappa \circ \alpha \circ \hat\kappa^{-1}}_{=:\mu}(\hat\kappa (V(\xi)). \]
Since $\alpha<\id$ it follows that $\mu<\id$. Hence, applying Proposition 3.2 from~\cite{Grune.2014} to $\widehat V = \hat\kappa(V)$ implies that there exists $\rho\in\K_\infty$ and $\lambda\in(0,1)$ such that $W := \rho(\widehat V)$ satisfies $W(x(M,\xi,{\bf u}_q)) \le \lambda W(\xi)$. 
Iterating this inequality yields the existence of ${\bf u}$ with
\begin{equation*}
\begin{split}
W(x(kM,\xi,{\bf u})) \le \lambda^k W(\xi) \\ 
 V(x(kM+j,\xi,{\bf u})) \le \hat\kappa(V(x(kM,\xi,{\bf u}))).
 \end{split} 
 \end{equation*}
For $\widetilde V = \rho(V)$ this implies
\begin{equation*}
\begin{split} 
\widetilde V(x(kM+j,\xi,{\bf u})) & =  \rho(V(x(kM+j,\xi,{\bf u}))  \\
&\hspace{-1cm} \le  \rho\circ\hat\kappa(V(x(kM,\xi,{\bf u}))\\
&\hspace{-1cm}  =  W(x(kM,\xi,{\bf u})) \\
&\hspace{-1cm} \le  \lambda^k W(\xi)   =  \lambda^k \underbrace{\rho\circ\hat\kappa\circ\rho^{-1}}_{=:\sigma\in\K_\infty}(\widetilde V(\xi)).
\end{split}
\end{equation*}
With an analogous computation as in (i) and (ii) we obtain
\begin{equation*}
\begin{split}
 V_N(\xi) \le \sum_{k=0}^{K-1} \sum_{j=0}^{M-1} \widetilde V\big(x(kM+j,\xi,{\bf u})\big) \\
\hspace{1cm}\le \sum_{k=0}^{K-1} M \lambda^k \sigma(\widetilde V(\xi)) \le \frac M{1-\lambda}\sigma(\widetilde V(\xi)).
  \end{split} 
 \end{equation*}
Hence, the assumptions of the first part of Proposition \ref{prop:MPCstab} are satisfied for $\widetilde V$ in place of $V$ with $\delta(r) = M \sigma(r)/(1-\lambda)$. Thus, Proposition~\ref{prop:MPCstab} yields the assertion.
\end{proof}

\begin{example} 
We illustrate the performance of the classic MPC approach again for the nominal and perturbed systems~\eqref{eq:nominal} and~\eqref{eq:perturbed}.
We use the same initial condition $\xi=(-1, 1, 1)^T$ as in Example~\ref{ex:mstep_mpc}, the fs-CLF $V(x)=x^TPx$ from Example~\ref{ex:mstep_mpc} as stage cost and the optimization horizon $N=6$. Figure \ref{fig:classic} shows the resulting state trajectory for applying the control computed by Algorithm~\ref{alg:classic-mpc} to the perturbed system~\eqref{eq:perturbed}.
The effect of the perturbation is comparable to the updated shrinking horizon MPC Algorithm in Figure~\ref{fig:updated}; after the transient phase the maximal deviation of $x_1$ to the desired equilibrium is 0.363 here compared to 0.387 in the shrinking horizon algorithm.
However, one observes that the trajectories in Figure~\ref{fig:classic} appear smoother than those in Figure~\ref{fig:updated}.
Further numerical tests have revealed that the loss of smoothness in Figure~\ref{fig:updated} is mainly due to the contractive constraints and not due to the shrinking horizon. Hence, this is an advantage for MPC without using contraction constraints.
However, we emphasize that we have not rigorously checked the assumptions of Theorem~\ref{thm:OCP-3a} (which are usually quite conservative, anyway), but rather determined the optimization horizon $N$ by trial and error.
Hence, in contrast to Algorithms~\ref{alg:multi-step-mpc} and~\ref{alg:re-opt-step-mpc}, there is no formal guarantee for asymptotic stability here. 

\begin{figure}[!ht]
\centering
\includegraphics[width=0.49\textwidth]{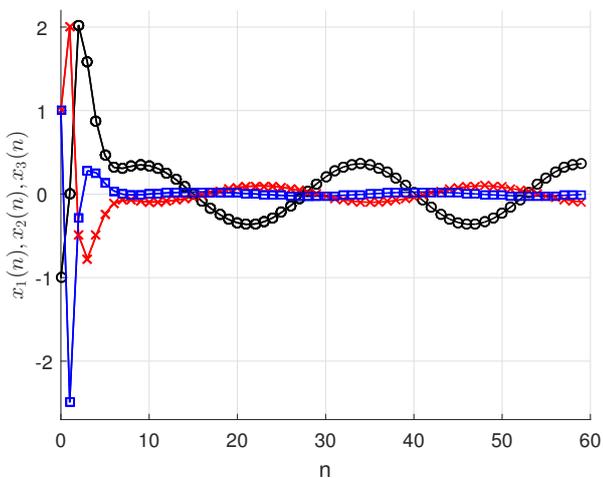}
\vspace{-0.6cm}
\caption{State trajectories of the perturbed system~\eqref{eq:perturbed}, $x_1$ (black~$\pmb\circ$), $x_2$ (red~$\color{red}\pmb\times$), $x_3$ (blue $\color{blue}\pmb\square$) 
with control input computed via Algorithm~\ref{alg:classic-mpc}. }
\label{fig:classic}
\end{figure}
\end{example}

\section{Conclusions and outlook} \label{sec:conclusions}

We have exploited the notion of fs-CLF to develop control design approaches for discrete-time systems. 
To this end, the controller design problem has been reformulated into an optimization problem.
Motivated by state-of-the-art applications, we have provided three different MPC schemes via fs-CLFs: i) contractive multi-step MPC, ii) contractive updated multi-step MPC, iii) classic MPC without stabilizing terminal constraints.
We have illustrated the MPC schemes via an example.

The results of the paper can be extended in several directions: fs-LFs are leveraged to develop \emph{nonconservative} small-gain and dissipativity conditions for stability analysis of large-scale systems~\cite{Noroozi.2014,Geiselhart.2015,Gielen.2015,Noroozi.2018.SG}.
We aim to fuse the results of the current paper with the nonconservative small-gain and dissipativity to develop distributed MPC schemes.
Applications of such results to smart grids, smart city and mobile robots are expected.
The analysis in this work can also be generalized to systems subject to disturbances.

\bibliographystyle{IEEEtran}
\bibliography{RobustConsensusBib}   

\end{document}